\documentclass[10pt]{article}

\usepackage[margin=1in]{geometry}
\usepackage{amsmath,amssymb,amsthm}
\usepackage{microtype}
\usepackage{setspace}
\usepackage{xcolor}
\usepackage{titlesec}
\titleformat{\section}{\normalfont\large\bfseries}{\thesection}{1em}{}
\titlespacing*{\section}{0pt}{2ex plus 1ex minus .2ex}{1ex plus .2ex}
\newtheorem{theorem}{Theorem}
\newtheorem{lemma}{Lemma}
\newcommand{\bE}{\mathbb{E}}

\newcommand{\cH}{\mathcal{H}}

\newcommand{\Pb}{\mathbb P}
\newcommand{\Var}{\operatorname{Var}}
\newcommand{\Cov}{\operatorname{Cov}}

\title{\vspace{-0.7cm} Perfect matchings in hypergraphs and Feige's inequality}
\author{Aleksa Milojevi\'c\thanks{Department of Mathematics, ETH, Z\"urich, Switzerland. Research supported in part by SNSF grant 200021-228014. {Email}: {\tt \{aleksa.milojevic, benjamin.sudakov\}@math.ethz.ch}} \and Benny Sudakov\footnotemark[1]}
\date{}

\begin{document}
\setstretch{1.2}

\maketitle
\vspace{-0.2cm}

How large of a minimum degree does an $n$-vertex graph need before we are sure that it contains a perfect matching? Dirac's theorem states that a graph on an even number of vertices in which each vertex has degree at least $n/2$ has this property. In this short expository note, intended to be used in the classroom, we discuss how this statement generalizes to hypergraphs. In particular, we highlight an elegant connection between fractional perfect matchings in hypergraphs and a probabilistic inequality about nonnegative random variables, which was conjectured by Feige. We also present a very short self-contained proof of Feige's conjecture. 

A $k$-uniform hypergraph $\cH$ is a collection of sets of size $k$, called \textit{hyperedges}, on a ground set of size $n$, denoted by $V(\cH)$. For an integer $1\leq d\leq k$, the degree of a $d$-tuple of vertices $T\subseteq V(\cH)$, denoted by $\deg_\cH T$ is the number of hyperedges containing $T$, and the minimum $d$-degree of $\cH$ is the minimum degree of some $d$-tuple of vertices $\delta_d(\cH)=\min_{T\subseteq V(\cH), |T|=d} \deg_\cH T$. A \textit{matching} is a collection of disjoint edges in $\cH$, and a matching is perfect if it covers all vertices of $\cH$. For a pair of integers $1\leq d\leq k$, we ask:
\vspace{-0.2cm}
\[\text{What minimum $d$-degree of a hypergraph $\cH$ guarantees that $\cH$ always has a perfect matching?} \] 
\vspace{-0.8cm}

\noindent
The first step in answering this question is to determine what minimum degree guarantees that $\cH$ contains a \textit{fractional} perfect matching. A \textit{fractional matching} is an assignment of weights $\nu(e)\in [0, 1]$ to all hyperedges of $\cH$ such that for every vertex $v\in V(\cH)$ we have $\sum_{e:v\in e}\nu(e)\leq 1$. Such a matching is \textit{perfect} if the weight of all hyperedges satisfies $\sum_{e\in E(\cH)}\nu(e)=n/k$ (which is the maximum possible value). Fractional matchings are a relaxation of the usual notion, which corresponds to the case when $\nu(e)\in \{0, 1\}$ for every hyperedge $e$.

\begin{theorem}\label{thm:fractional_matching}
Let $1\leq d\leq k$ be integers and let $\cH$ be the hypergraph with $\delta_d (\cH)> \Big(1-\big(1-\frac{1}{k}\big)^{k-d}\Big)\frac{n^{k-d}}{(k-d)!}$. Then $\cH$ has a perfect fractional matching.
\end{theorem}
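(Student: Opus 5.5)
We argue by LP duality: if $\cH$ has no perfect fractional matching, we produce a weighting of the vertices that forces some $d$-tuple to have small degree. The maximum fractional matching has size equal to the minimum fractional vertex cover, that is, the minimum of $\sum_v w(v)$ over $w\colon V(\cH)\to[0,1]$ with $\sum_{v\in e} w(v)\ge 1$ for every hyperedge $e$. So suppose $\cH$ has no perfect fractional matching. Then there is a fractional cover $w$ with $\sum_v w(v)<n/k$. Equivalently, the average weight is $\mu:=\frac1n\sum_v w(v)<1/k$, while every edge has total weight at least $1$.

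Next we count the hyperedges containing a fixed $d$-set $T$. If $T\cup S$ is an edge, where $S$ is a $(k-d)$-set disjoint from $T$, then $w(S)\ge 1-w(T)$. We choose $T$ to be the $d$ vertices of smallest weight. Then $w(T)\le d\mu$, and every vertex outside $T$ has weight at least $\max_{t\in T} w(t)$. A cleaner alternative is to average over all $d$-sets $T$: the total count $\sum_T \deg T$ is at most the number of pairs $(T,S)$ with $w(T)+w(S)\ge 1$. Either way, $\deg T$ is at most the number of $(k-d)$-sets $S$ of vertices outside $T$ with $w(S)\ge 1-w(T)\ge 1-d\mu$, roughly. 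It therefore suffices to show that this number is at most
\[
\Big(1-\big(1-\tfrac1k\big)^{k-d}\Big)\frac{n^{k-d}}{(k-d)!}.
\]

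Let $X_1,\dots,X_{k-d}$ be independent copies of $w(v)$ for a uniformly random vertex $v$. These are random variables in $[0,1]$ with mean $\mu<1/k$. Up to lower-order terms (ordered versus unordered tuples, and repeated vertices), the count above equals $\frac{n^{k-d}}{(k-d)!}\,\Pb\big(X_1+\dots+X_{k-d}\ge 1-d\mu\big)$. Since $1-d\mu> (k-d)/k=(k-d)\mu'$ with $\mu'=1/k$, the bound reduces to a Feige-type inequality: for independent $X_i\in[0,1]$ with means at most $1/k$, we want
\[
\Pb\Big(\sum_{i=1}^{m} X_i \ge \tfrac{m}{k}\Big)\le 1-\big(1-\tfrac1k\big)^{m},\qquad m=k-d.
\]
The right-hand side is exactly the probability that not all $X_i$ vanish when the $X_i$ are Bernoulli$(1/k)$. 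This is the extremal case, corresponding to the extremal construction: a set $A$ of just under $n/k$ vertices of weight $1$, with all edges required to meet $A$.

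The main obstacle is this probabilistic inequality. It is presumably the form of Feige's inequality that the paper proves later, and I would invoke that result rather than reprove it. Care is needed with the choice of $T$: the shift by $w(T)$ must be handled uniformly. Picking $T$ as the $d$ lightest vertices gives $w(T)\le d\mu$, but the $X_i$ must then be taken over the vertices outside $T$, whose average weight is still at most $\mu$ (removing the lightest vertices can only raise the average slightly, by $O(1/n)$). The error from sampling with or without replacement is lower order. It is absorbed by the strict inequality in the hypothesis only if the Feige bound has slack or is applied with a slightly perturbed mean. I would handle this with a compactness/continuity argument in $\mu$, or by working directly with exact counts via a symmetric-sum version of the inequality.
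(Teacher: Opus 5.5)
There is a genuine gap, and it is in the reduction to the probabilistic inequality. The inequality you want, $\Pb\big(\sum_{i=1}^m X_i\ge m/k\big)\le 1-(1-1/k)^m$ for independent $X_i\in[0,1]$ with means at most $1/k$, is false. Take $X_i\equiv 1/k$: the left-hand side is $1$.

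After rescaling $Y_i=kX_i$, your inequality asks for Feige's bound with threshold equal to the total mean, i.e.\ $\delta=0$. Theorem~\ref{thm:feige} does not give this; it needs $\delta\ge 1$. Keeping your sharper threshold $1-d\mu$ and the strict bound $\mu<1/k$ does not rescue it. For $k=3$, $d=1$, let $X_i$ equal $0.35$ with probability $0.9$ and $0$ otherwise. Then $\mu=0.315<1/3$, yet $\Pb(X_1+X_2\ge 1-\mu)=0.81>5/9=1-(2/3)^2$. The gap between threshold and mean in your setup is $1-k\mu$, which can be arbitrarily small, and in that regime no Feige-type bound gives $(1-1/k)^{k-d}$.

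The loss happens when you replace $1-w(T)$ by $1-d\mu$. In the example, the lightest vertex has weight $0$, so an edge through $T$ needs $w(S)\ge 1$, which no pair achieves. Your relaxed condition is met by $81\%$ of pairs. You observe that every vertex outside $T$ weighs at least $\max_{t\in T}w(t)$, but you never use it, and that is exactly the missing ingredient. The ``average over all $T$'' alternative fails for the same reason. A hypergraph with a small fraction of weight-$0$ vertices and all others of weight slightly above $1/k$ (average still below $1/k$) has no perfect fractional matching but average $d$-degree close to $\binom{n-d}{k-d}$. Only the minimum degree, at the specific lightest $T$, is small.

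The fix, which is the paper's argument, is to \emph{shift and rescale}. Let $t=w(T)/d<1/k$. For uniform $v\in V(\cH)$, put $X=0$ if $v\in T$ and $X=\frac{k}{1-kt}(w_v-t)\ge 0$ otherwise. Since $\sum_{v\in T}(w_v-t)=0$ and $\sum_v(w_v-t)<n/k-nt$, you get $\bE X\le 1$. If $T\cup\{v_1,\dots,v_{k-d}\}$ is an edge, then $\sum_j X_j\ge \frac{k}{1-kt}\big(1-dt-(k-d)t\big)=k=(k-d)+d$. So Theorem~\ref{thm:feige} applies to $k-d$ variables with $\delta=d\ge 1$, no matter how close $\sum_v w_v$ is to $n/k$.

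This construction also removes your worries about lower-order terms. Sample $v_1,\dots,v_{k-d}$ with replacement from all of $V(\cH)$ and count ordered tuples. Tuples forming an edge with $T$ number exactly $(k-d)!\deg_\cH T$, and all of them have $\sum_j X_j\ge k$. Comparing with the $n^{k-d}$ total tuples then gives an exact contradiction, with no continuity or compactness argument needed.
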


Since the maximal number of edges containing a $d$-tuple in a hypergraph $\cH$ is $\binom{n-d}{k-d}$, Theorem~\ref{thm:fractional_matching} is often phrased in the following, almost equivalent, way. For every $\varepsilon>0$, integers $1\leq d\leq k$ and sufficiently large $n$, if a hypergraph $\cH$ has $\delta_d(\cH)\geq \big(1-(1-1/k)^{k-d}+\varepsilon)\binom{n-d}{k-d}$, then it contains a fractional perfect matching.

The proof of this theorem is based on Feige's inequality, which we now motivate. If $X$ is a nonnegative random variable of expectation $1$, then the probability that $X$ falls below some threshold $\lambda$ can be estimated by Markov's inequality, which states that $\Pb[X< \lambda]\geq 1-\frac{1}{\lambda}$. This is tight in the case where $X$ takes only two values, $0$ and $\lambda$. However, if we replace $X$ by a sum of nonnegative random variables, it turns out that the conclusion can be strengthened significantly. Namely, if $X_1, \dots, X_n$ are independent nonnegative random variables of expectation $1$, then Feige's inequality gives a lower bound on $\Pb[X_1+\dots+X_n<\lambda]$. Since the expectation of the sum $X_1+\dots+X_n$ is $n$, the common way to parametrize the threshold is to write $\lambda=n+\delta$. 

\begin{theorem}\label{thm:feige}
Let $X_1,\ldots,X_n$ be independent random variables with $\bE X_i\leq 1$ and $X_i\geq 0$ and let $\delta\geq 1$. Then
\vspace{-0.2cm}
\[
 \Pb\Big(\sum_{i=1}^n X_i<n+\delta\Big) \geq \Big(1-\frac{1}{n+\delta}\Big)^n
\]
\end{theorem}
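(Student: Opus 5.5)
The plan is induction on $n$, conditioning on the last variable. Write $\lambda=n+\delta$ and $S_{m}=X_1+\dots+X_m$. For $n=1$ the claim is Markov's inequality: $\Pb(X_1\geq 1+\delta)\leq 1/(1+\delta)$. For the inductive step, independence gives
\[
\Pb(S_n<\lambda)=\bE\big[\phi(X_n)\big],\qquad \phi(x)=\Pb(S_{n-1}<\lambda-x).
\]
Since $\lambda-x=(n-1)+(\delta-x)$, the induction hypothesis applies when $\delta-x\geq 1$, giving $\phi(x)\geq (1-\frac{1}{\lambda-x})^{n-1}$. For $x$ close to $\delta+1$ I would use Markov, $\phi(x)\geq 1-\frac{n-1}{\lambda-x}$, and $\phi\geq 0$ beyond that. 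Call the resulting explicit lower bound $h(x)$.

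The problem then becomes deterministic. Minimize $\bE h(X)$ over $X\geq 0$ with $\bE X\leq 1$. This is a linear program in the law of $X$, so the minimum is attained at two-point laws $\{a,b\}$ with $a\leq 1\leq b$. Equivalently, one needs a line $\ell(x)=\alpha-\beta x$ with $\beta\geq 0$, $\ell\leq h$ on $[0,\infty)$ and $\alpha-\beta\geq (1-\frac1\lambda)^n$. The natural candidate is
\[
\ell(x)=\Big(1-\frac1\lambda\Big)^{n-1}\Big(1-\frac{x}{\lambda}\Big),
\]
which matches the extremal example where every $X_i$ takes the values $0$ and $\lambda$.

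The main obstacle is in exactly this step, and a quick check shows the naive version fails. The bound $h$ vanishes at $x=\delta+1$, while $\ell$ stays positive until $x=\lambda$. Any line through $(1,(1-1/\lambda)^n)$ that vanishes by $\delta+1$ has value above $h(0)$ at $x=0$. So the inductive hypothesis with only the threshold $\lambda-x$ loses too much on the range $x\in(\delta,\lambda)$. I would try two fixes, in this order.

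First, strengthen the induction. Prove the more general statement that for thresholds $\lambda\geq m+1$ and $m$ variables, $\Pb(S_m<\lambda)\geq (1-\frac1\lambda)^m$. Also prove, as a companion statement, a better bound than Markov in the regime $m<\lambda<m+1$, for instance $\Pb(S_m<\lambda)\geq 1-\frac{m}{\lambda}$ combined with the product form. The goal is that $h$ becomes large enough on $(\delta,\lambda)$ to dominate $\ell$.

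Second, avoid conditioning on a single variable. Reduce every $X_i$ simultaneously to a two-point law $\{a_i,b_i\}$; this is valid by multilinearity of $\Pb(S_n<\lambda)$ in the laws. Then, by a smoothing and exchange argument, push to $a_i=0$ and equal $b_i$. That leaves a binomial-type computation, $\Pb(\text{no }X_i\text{ is large})$ or close to it, where the bound $(1-1/\lambda)^n$ should be checked directly.

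Verifying the needed one-variable inequality $h\geq\ell$, or showing that the reduced extremal configuration is all $X_i\in\{0,\lambda\}$, is where I expect essentially all the work to lie.
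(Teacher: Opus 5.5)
Your diagnosis that the naive induction fails is correct. The trouble is that the step you yourself flag as carrying essentially all the work is not supplied by either fix.

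\textbf{Why the first fix cannot work.} The obstruction is structural. For $x\ge\delta+1$ the residual threshold is $\lambda-x\le n-1$. There the infimum of $\Pb(S_{n-1}<\lambda-x)$ over admissible laws is exactly $0$: take $X_1=\dots=X_{n-1}\equiv 1$. So no hypothesis on $S_{n-1}$ that depends only on the threshold can make $h$ positive on $[\delta+1,\lambda)$. A sharper companion bound in the window $m<\lambda<m+1$ only improves $h$ on $(\delta,\delta+1)$, and your own line computation then still rules out a certificate. Concretely, take $X_n\in\{0,\delta+1\}$ with $\Pb(X_n=\delta+1)=\frac{1}{\delta+1}$. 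The best threshold-only bound is then $\frac{\delta}{\delta+1}\bigl(1-\frac1\lambda\bigr)^{n-1}$, which is strictly below $\bigl(1-\frac1\lambda\bigr)^{n}$ for every $n\ge 2$. Conditioning on a single variable can only succeed if you carry more information about $S_{n-1}$ than its tail probabilities.

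\textbf{Why the second fix is incomplete.} The reduction to two-point laws is fine; it is Lemma~\ref{lemma:two_values}. But the smoothing to $a_i=0$ with equal $b_i$ is asserted with no monotone quantity behind it. It also cannot hold as a general principle. For small $\delta>0$ and $n\ge 2$, take one variable on $\{0,1+\delta\}$ and all others $\equiv 1$. This gives probability $\frac{\delta}{1+\delta}$, which is smaller than the value $\bigl(1-\frac{1}{n+\delta}\bigr)^n$ of your target configuration. So any valid smoothing must use $\delta\ge 1$ essentially, and you do not indicate where.

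\textbf{The missing idea.} The paper tracks the conditional mean of the sum below the threshold. Write $S=\sum_j X_j$ and $S_i=S-X_i+u_i$ (not your partial sums), and set $r(S)=\lambda-\bE[S\mid S<\lambda]$ and $b(S)=\bE S-\bE[S\mid S<\lambda]$. Replacing a two-point $X_i\in\{u_i,v_i\}$ by $u_i$ gives $\Pb[S<\lambda]\ge\Pb[S_i<\lambda]\bigl(1-\frac{b(S)-b(S_i)}{r(S)}\bigr)$. This holds because every newly gained outcome has $S\ge\lambda$, and $X_i$ is independent of $\{S_i<\lambda\}$. The identity $\Var(S\mid S<\lambda)=\sum_i(v_i-u_i)\Pb[X_i=u_i\mid S<\lambda]\,(r(S_i)-r(S))$ guarantees some move with $r(S_i)\ge r(S)$. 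For such a move, $0\le b(S)-b(S_i)\le 1$. Iterate until the sum is deterministic. Telescoping and Bernoulli's inequality then give $\Pb[S<\lambda]\ge\bigl(1-\frac{1}{\delta+b}\bigr)^{b}$ with $b=b(S)\in[0,n]$, since $r(S)=\delta+b$. The hypothesis $\delta\ge 1$ enters only to show this is minimized at $b=n$. This is a one-variable-at-a-time scheme like yours. The difference is that the loss at each step is controlled by the global statistic $b(S)$ of the current sum, which is exactly the information a threshold-only induction throws away.
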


In Section~\ref{sec:matchings} we discuss the problem of hypergraph matchings in more detail and give the proof of Theorem~\ref{thm:fractional_matching}. In Section~\ref{sec:feige} we discuss the proof of Feige's inequality, and give some additional remarks.

\section{Perfect fractional matchings}\label{sec:matchings}

The connection between perfect matchings and Feige's inequality presented here appears in the work of Alon, Frankl, Huang, R\"odl, Ruci\'nski and Sudakov \cite{AFHRRS12}. Further, they showed how one can extend the proof to to usual perfect matchings, as long as $\delta_d(\cH)\geq \big(\max\{\frac{1}{2}, 1-(1-\frac{1}{k})^{k-d}\}+\varepsilon\big)\binom{n-d}{k-d}$, using the absorption method. The constant $1/2$ in the maximum above is the so-called \textit{divisibility barrier} and it necessary for the absorption method. Although subsequent works \cite{CLZ26,HLWY26,TZ13} sharpened these asymptotic results and found the precise threshold, the value of the constant $1-(1-1/k)^{k-d}$ is ultimately determined by this argument.

\begin{proof}[Proof of Theorem~\ref{thm:fractional_matching}.]
If the largest fractional matching in $\cH$ has weight less than $n/k$, by linear programming duality there is a fractional vertex cover of $\cH$ with total weight less than $n/k$. In other words, there are weights $w_v$ for all vertices $v\in V(\cH)$ such that for each hyperedge $e$ we have $\sum_{v\in e} w_v\geq 1$ and $\sum_{v\in V(\cH)}w_v<n/k$.

Let $T$ be the set of $d$ vertices with the smallest weight, and note that their average weight $t=\frac{1}{d}\sum_{v\in T}w_v$ is smaller than $1/k$ (since the average weight of all vertices is $<1/k$). Moreover, any vertex $v\notin T$ has $w_v\geq t$. 

Let us now define a nonnegative random variable $X$ as follows: pick a uniformly random vertex $v\in V(\cH)$ and put $X=0$ if $v\in T$ and $X=\frac{k}{1-kt}(w_v-t)$ if $v\notin T$. To bound the expectation of $X$, observe that
\vspace{-0.2cm}
\begin{equation}\label{eq:expectation_X}
    \bE\Big[\frac{1-kt}{k}X\Big]=\frac{1}{n}\sum_{v\in V(\cH)}(w_v-t)<\frac{1}{n}(n/k-nt)=1/k-t= \frac{1-kt}{k}.\vspace{-0.2cm}
\end{equation}

Note that the contributions to $\bE[\frac{1-kt}{k}X]$ of vertices of $T$ are $0$ and not $w_v-t$ as written above, but this makes no difference since $\sum_{v\in T}(w_v-t)=\sum_{v\in T}0$. The equation~(\ref{eq:expectation_X}) shows $\bE X\leq 1$.

Take $k-d$ independent copies of $X$, denoted by $X_1, \dots, X_{k-d}$, and say the corresponding vertices of $\cH$ are $v_1, \dots, v_{k-d}$. By Theorem~\ref{thm:feige} applied to $k-d$ random variables and $\delta=d\geq 1$, we have 
\vspace{-0.2cm}
\begin{equation}\label{eq:feige_applied}
\Pb\Big[\sum_{j=1}^{k-d} X_j< (k-d)+d\Big]\geq \Big(1-\frac{1}{(k-d)+d}\Big)^{k-d}=\Big(1-\frac{1}{k}\Big)^{k-d}.\vspace{-0.2cm}
\end{equation}
\vspace{-0.2cm}

The punchline is that whenever the vertices $v_1, \dots, v_{k-d}$ form a hyperedge together with $T$, we have $dt+\sum_{j=1}^{k-d}w_{v_j}\geq 1$ (since $w$ is a fractional cover), and so we have $\sum_{j=1}^{k-d} X_j=\frac{k}{1-kt}\sum_{j=1}^{k-d}(w_{v_j}-t)\geq \frac{k}{1-kt}\big(1-dt-(k-d)t\big)\geq k$. So, whenever $\sum_{j=1}^{k-d} X_j<k$, the set $\{v_{1}, \dots, v_{k-d}\}\cup T$ is not a hyperedge. Hence, (\ref{eq:feige_applied}) implies that there are at least $\big(1-\frac{1}{k}\big)^{k-d}n^{k-d}$ ways to choose an ordered $(k-d)$-tuple of vertices $(v_1, \dots, v_{k-d})$ which does not form a hyperedge with $T$. 

On the other hand, by the assumption on $\delta_d(\cH)$, there are at least $(k-d)!\delta_d(\cH)> \big(1-\big(1-\frac{1}{k}\big)^{k-d}\big)n^{k-d}$ ways to choose an ordered $(k-d)$-tuple of vertices $(v_1, \dots, v_{k-d})$ which forms an edge together with $T$. So, in total, there are more than $n^{k-d}$ ways to choose a $(k-d)$-tuple of vertices $(v_1, \dots, v_{k-d})$, a contradiction. 
\end{proof}

Note that Theorem~\ref{thm:feige} which we use requires the gap $\delta$ between the expectation of $\sum_{i=1}^n X_i$ and the threshold $n+\delta$ to be at least $\delta\geq 1$. So, at first glance, it may be surprising that the above proof is able to get any mileage out of the fact that the best fractional cover has weight $<n/k$, without any lower bound on how large the gap between $\sum_{v}w_v$ and $n/k$ is. In some sense, the key point is that in the definition of the random variables $X_i$, we rescale the weights $w_v$, to ensure that the relevant gap is $d$ (see equation~(\ref{eq:feige_applied})).

The constant $1-(1-1/k)^{k-d}$ in Theorem~\ref{thm:fractional_matching} is best possible, as the so-called \textit{space barrier} shows. Take $A\subseteq V(\cH)$ to be a set of $n/k-1$ vertices (where $k$ divides $n$ say), and consider the hypergraph $\cH$ consisting of those $k$-sets which intersect $A$. The heaviest fractional matching $\nu$ of $\cH$ has weight less than $n/k$. To see this, note that $\sum_{e\in E(\cH)}\nu(e)\leq \sum_{v\in A} \sum_{e\ni v} \nu(e)\leq \sum_{v\in A} 1=|A|<n/k$. The first inequality follows since every edge of $\cH$ contains some vertex of $A$, while the second one just applies the definition of the fractional matching to say that $ \sum_{e\ni v} \nu(e)\leq 1$ for any $v\in V(\cH)$.\footnote{This argument is a fractional version of the perhaps more familiar argument which says that if every hyperedge intersects the set $A$, then there do not exist $|A|+1$ disjoint edges.} The minimum $d$-degree of this hypergraph is determined by the number of edges containing a $d$-tuple of vertices disjoint from $A$, which is \vspace{-0.2cm}\[\delta_d(\cH)\!=\!\binom{n\!-\!d}{k\!-\!d}-\binom{n\!-\!d\!-\!\frac{n}{k}\!+\!1}{k-d}\!=\!(1+o(1)) \Bigg[\frac{n^{k-d}}{(k\!-\!d)!}-\big(1-\frac{1}{k}\big)^{k-d}\frac{n^{k-d}}{(k\!-\!d)!}\Bigg]\!=\!\Big(1+o(1)-\big(1-\frac{1}{k}\big)^{k-d}\Big)\frac{n^{k-d}}{(k\!-\!d)!}.\] 

\section{Proof of Feige's inequality}\label{sec:feige}

Feige's inequality was conjectured in a slightly different form from Theorem~\ref{thm:feige} by Feige in \cite{F06}. He asked to show that for all $\delta>0$, we have $\Pb\big[\sum_{i=1}^n X_i<n+\delta\big]\geq \min\{\frac{\delta}{\delta+1}, 1/e\}$ for independent nonnegative variables $X_1, \dots, X_n$ with mean $1$. After several weaker results and special cases of the conjecture were addressed \cite{AAMM24, EF25, G20, GHLL20, HZZ10}, the conjecture was proven in the papers of Fu, Han, Wang, Yan, Zhang and Zhou \cite{FHWYZZ26} and Nie and Wei \cite{NW26}. Both articles relied on external input \cite{VT26}, which motivated us to write the exposition of a short and self-contained proof of Feige's inequality. This proof is inspired by Han's elegant simplification \cite{Han26} of the proof of Samuels' conjecture from Ling's paper \cite{Ling26}. Samuels' conjecture \cite{S66} is a more general version of Feige's conjecture, in which the means of the variables $X_i$ do not all have to be the same.

\begin{proof}[Proof of Theorem~\ref{thm:feige}.]
By increasing the random variables, we may assume that $\bE X_i=1$ for all $i$.
A standard reduction, originally observed by Feige \cite{F06}, allows us to assume that random variables $X_i$ are supported on at most two values, as shown in Lemma~\ref{lemma:two_values}. For completeness, we present the proof of this statement after the main proof. It will be convenient to define $\lambda=n+\delta$ and $S=\sum_{i=1}^n X_i$. Our goal will be to give a lower bound on $\Pb[S<\lambda]$.

We will perform a sequence of \textit{moves}, where in each move we replace some random variable $X_i$ by its lower value, denoted by $u_i$. After such a move, we obtain the sum $S_i=\sum_{j\neq i} X_j + u_i$. We observe that $S_i\leq S$, meaning that $\Pb[S_i<\lambda]\geq \Pb[S<\lambda]$. Our first lemma shows a lower bound on $\Pb[S<\lambda]$ in terms of $\Pb[S_i<\lambda]$. 

To state the exact bound, let us introduce two pieces of notation. For a random variable $S$, we denote $r(S)=\lambda-\bE[S|S<\lambda]$ and $b(S)=\bE[S]-\bE[S|S<\lambda]$. To motivate these two quantities, we recommend the reader to look at the proof of Lemma~\ref{lemma:lower_bound}. As we will see, the larger $r(S)$ is, the better the lower bound provided by Lemma~\ref{lemma:lower_bound} will be. Therefore, throughout the replacement process, we would like the value of $r(S)$ to increase, motivating the following definition. A move is \textit{safe} if $r(S_i)\geq r(S)$. Then, we have the following lower bound on $\Pb[S<\lambda]$.

\begin{lemma}\label{lemma:lower_bound}
For any move $S\mapsto S_i$, we have
\vspace{-0.1cm}
\begin{equation}\label{eq:lower_bound}
\Pb[S<\lambda]\ge \Pb[S_i<\lambda]\Big(1-\frac{b(S)-b(S_i)}{r(S)}\Big).
\end{equation}
If the move is safe, we also have $0\leq b(S)-b(S_i)\leq 1$.
\end{lemma}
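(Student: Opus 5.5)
Write $p=\Pb[S<\lambda]$ and $q=\Pb[S_i<\lambda]$, and let $u_i<v_i$ be the two values of $X_i$. The plan is to rewrite both $r$ and $b$ as truncated expectations and then compare $S$ with $S_i$ pointwise. Since $\bE S=n<\lambda$, we have $p>0$, and $S<\lambda$ holds with positive probability, so $r(S)>0$. Likewise $q\ge p>0$. The basic identity is
\[
r(S)\,p=\bE\big[(\lambda-S)\mathbf 1\{S<\lambda\}\big]=\bE\big[(\lambda-S)_+\big],
\]
and similarly $r(S_i)\,q=\bE[(\lambda-S_i)_+]$.

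\textbf{Step 1 (pointwise comparison).} Put $D=S-S_i=X_i-u_i\ge 0$. Then $D$ is independent of $S_i$, because $S_i$ depends only on the $X_j$ with $j\ne i$. Moreover $\bE D=1-u_i$. Pointwise we have
\[
(\lambda-S)_+\ge(\lambda-S_i-D)\mathbf 1\{S_i<\lambda\}=(\lambda-S_i)_+-D\,\mathbf 1\{S_i<\lambda\}.
\]
Taking expectations and using independence gives
\[
r(S)\,p\ge r(S_i)\,q-(1-u_i)\,q=q\big(r(S_i)-1+u_i\big).
\]

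\textbf{Step 2 (identify the bracket).} By definition, $b(S)-r(S)=\bE S-\lambda$ and $b(S_i)-r(S_i)=\bE S_i-\lambda$. Also $\bE S-\bE S_i=1-u_i$. Subtracting,
\[
b(S)-b(S_i)=1-u_i+r(S)-r(S_i),
\]
so $r(S_i)-1+u_i=r(S)-\big(b(S)-b(S_i)\big)$. Substituting this into Step 1 and dividing by $r(S)>0$ yields exactly \eqref{eq:lower_bound}.

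\textbf{Step 3 (safe moves).} The upper bound follows from the identity in Step 2. Safety means $r(S_i)\ge r(S)$, so $b(S)-b(S_i)\le 1-u_i\le 1$, using $u_i\ge 0$.

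The lower bound $b(S)-b(S_i)\ge 0$ is the step I expect to need care. I would derive it from \eqref{eq:lower_bound} itself, using the monotonicity $p\le q$ (which holds because $S_i\le S$). Suppose $b(S)-b(S_i)<0$. Then the factor in \eqref{eq:lower_bound} exceeds $1$. Since $q>0$, this gives $p>q$, which contradicts $p\le q$. Hence $0\le b(S)-b(S_i)\le 1$ for safe moves. In fact this argument shows $b(S)\ge b(S_i)$ for every move, not only safe ones.
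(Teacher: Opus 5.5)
Your proof is correct and is essentially the paper's argument: your pointwise bound $(\lambda-S)_+\ge(\lambda-S)\mathbf 1\{S_i<\lambda\}$ is the paper's observation that $S\ge\lambda$ on $E_i\setminus E$, written with truncated expectations instead of conditional ones, and your remaining steps coincide with the paper's (independence of $X_i$ from $S_i$, the identity $b(S)-b(S_i)=r(S)-r(S_i)+1-u_i$, and deducing $b(S)\ge b(S_i)$ from $\Pb[S<\lambda]\le\Pb[S_i<\lambda]$). One small imprecision: in later moves of the iteration some variables are already fixed, so you only have $\bE S\le n$ rather than $\bE S=n$, but this still gives $\Pb[S<\lambda]>0$ and $r(S)>0$.
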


Our second lemma shows that there is always a safe move available.

\begin{lemma}\label{lemma:safe_move}
Unless $S$ is deterministic, there is some $i$ such that replacing $X_i$ by $u_i$ is a safe move.
\end{lemma}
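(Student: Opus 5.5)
The plan is to turn safety into a sign condition on one test function, and then get a contradiction from a covariance identity. Following the reduction of Lemma~\ref{lemma:two_values}, each $X_i$ takes a lower value $u_i$ with probability $p_i$ and an upper value $v_i>u_i$ with probability $1-p_i$. If some $X_i$ is deterministic, the move $S\mapsto S_i$ does nothing and is trivially safe. So the real content is to find a \emph{nondeterministic} index $i$ (one with $0<p_i<1$) giving a safe move, and such an index exists because $S$ is not deterministic. Note also that $\Pb[S<\lambda]>0$ by Markov's inequality, and $\Pb[S_i<\lambda]\ge \Pb[S<\lambda]$, so all conditional expectations below are well defined.

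Put $m=\bE[S\mid S<\lambda]$ and define
\[
f(s)=(s-m)\mathbf{1}\{s<\lambda\},
\]
so that $\bE f(S)=0$. Since $r(S_i)\ge r(S)$ means $\bE[S_i\mid S_i<\lambda]\le m$, the move at $i$ is safe if and only if $\bE f(S_i)\le 0$. Let $T_i=\sum_{j\ne i}X_j$, which is independent of $X_i$. Then $S_i=T_i+u_i$, and conditioning on $X_i$ gives
\[
0=\bE f(S)=p_i\,\bE f(T_i+u_i)+(1-p_i)\,\bE f(T_i+v_i).
\]
Hence for $0<p_i<1$, the move at $i$ is \emph{unsafe}, i.e.\ $\bE f(T_i+u_i)>0$, exactly when $\bE[f(S)\mid X_i=v_i]=\bE f(T_i+v_i)<0$. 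Equivalently, unsafety says $\Cov\big(f(S),\mathbf{1}\{X_i=v_i\}\big)<0$.

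Suppose for contradiction that every nondeterministic index gives an unsafe move. Multiply the covariance inequality for index $i$ by $c_i=v_i-u_i>0$ and sum over the nondeterministic indices. Deterministic indices contribute nothing, since their indicator is constant. This gives
\[
\Cov\Big(f(S),\sum_i (v_i-u_i)\mathbf{1}\{X_i=v_i\}\Big)<0,
\]
with strict inequality because at least one term is present. The key observation is that $\sum_i (v_i-u_i)\mathbf{1}\{X_i=v_i\}=S-\sum_i u_i$, which is $S$ minus a constant. Therefore $\Cov(f(S),S)<0$.

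On the other hand, using $\bE f(S)=0$,
\[
\Cov(f(S),S)=\bE[f(S)S]=\bE\big[(S-m)^2\mathbf{1}\{S<\lambda\}\big]+m\,\bE f(S)\ge 0.
\]
This is a contradiction, so some nondeterministic index gives a safe move. The step I expect to need the most care is the equivalence between unsafety and the covariance sign: one must handle the division by $p_i$ and $1-p_i$, and use the identity $\bE f(S)=0$ correctly so that the inequalities stay strict. Once that is in place, the linear-combination trick with weights $v_i-u_i$ finishes the argument immediately.
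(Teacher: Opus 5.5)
Your proposal is correct and is essentially the paper's argument: since $(v_i-u_i)\Cov(f(S),\mathbf{1}\{X_i=v_i\})=\Pb[S<\lambda]\Cov(S,X_i\mid S<\lambda)$ and $\Cov(f(S),S)=\Pb[S<\lambda]\Var(S\mid S<\lambda)$, your weighted sum is the paper's decomposition $\Var(S\mid S<\lambda)=\sum_i\Cov(S,X_i\mid S<\lambda)\ge 0$, just written unconditionally and phrased as a contradiction. The only cosmetic difference is that you read off the sign through the upper branch $X_i=v_i$ using $\bE f(S)=0$, whereas the paper computes the covariance through the lower branch and gets the explicit factor $\Pb[X_i=u_i\mid S<\lambda]\,(r(S_i)-r(S))$.
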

 
Let us first show how to complete the proof using the above lemmas, and then we will present their proofs. We start with $S_0=S$ and repeatedly make a safe move until all variables are fixed. This produces a sequence of sums $S_0, S_1, S_2, \dots, S_n$, where $S_n$ is deterministic and at most $n<\lambda$. Since every move is safe, we have $r(S_t)\geq r(S)$ for all $t$ and therefore by repeated application of Lemma~\ref{lemma:lower_bound} we get that
\vspace{-0.1cm}
\[\Pb[S<\lambda]\ge \Pb[S_n<\lambda] \prod_{t=1}^n\left(1-\frac{b(S_{t-1})-b(S_{t})}{r(S_{t-1})}\right)\geq \prod_{t=1}^n\left(1-\frac{b(S_{t-1})-b(S_{t})}{r(S)}\right),\]
since $\Pb[S_n<\lambda]=1$ and $r(S_{t-1})\geq r(S)$. 

Let us denote $b=b(S)\in [0, n]$, and observe that $r(S)=\lambda-\bE[S|S<\lambda]=n+\delta+b(S)-\bE[S]=\delta+b$. Moreover, if we denote the summands $b(S_{t-1})-b(S_t)$ by $\Delta_t$, we observe that they telescope $\sum_{t=1}^n\Delta_t=b(S_0)-b(S_n)=b$ and satisfy $\Delta_t\in [0, 1]$. Hence, we have
\vspace{-0.1cm}
\[\Pb[S<\lambda]\ge \prod_{t=1}^n\left(1-\frac{\Delta_t}{\delta+b}\right)\ge \left(1-\frac{1}{\delta+b}\right)^{\sum_{t=1}^{n}\Delta_t}=\left(1-\frac{1}{\delta+b}\right)^{b},\]
where the second inequality follows from the fact that $1-\Delta_t/(\delta+b)\geq \big(1-1/(\delta+b)\big)^{\Delta_t}$, by Bernoulli's inequality (which states that for reals $r\in [0, 1]$ and $x>-1$ we have $1+xr\geq (1+x)^r$).

To obtain the final bound, it remains to show that the function $(1-1/(\delta+b))^b$ is minimized when $b=n$. This is the only place where we use the assumption $\delta\geq 1$. By Bernoulli's inequality with exponents at least one (which states that for $r\geq 1$ and $x>-1$ we have $(1+x)^r\geq 1+rx$), we have that 
\vspace{-0.1cm}
\[\Big(1+\frac{1}{n+\delta-1}\Big)^{n/b}\geq 1+\frac{n/b}{n+\delta-1}= 1+\frac{1}{b+(\delta-1)b/n}\geq 1+\frac{1}{b+\delta-1}.\]
\vspace{-0.5cm}

\noindent
Taking reciprocals shows that $1-\frac{1}{b+\delta}\geq \big(1-\frac{1}{n+\delta}\big)^{n/b}$, and so $\Pb[S<\lambda]\geq \Big(1-\frac{1}{\delta+b}\Big)^{b}\geq \Big(1-\frac{1}{n+\delta}\Big)^n$. It remains to show the proofs of the two auxiliary lemmas.

\begin{proof}[Proof of Lemma~\ref{lemma:lower_bound}.]
The goal of this proof is to control how the probability $\Pb[S<\lambda]$ increases when $X_i$ is replaced by $u_i$. To achieve this, we want an upper bound on the weight of outcomes of $S_i$ which land below $\lambda$, where the corresponding outcome of $S$ did not land below $\lambda$. There is a simple way to bound this using the conditional averages, as follows. 

Consider the event $E_i=\{S_i<\lambda\}$, having probability $\Pb[S_i<\lambda]$. This event naturally splits into $E=\{S<\lambda\}$, the outcomes where the original sum was below the threshold, and the newly added outcomes $E_i\backslash E$ where we have $S_i<\lambda$ and $S>\lambda$. The key point is that every new outcome raises the expectation of $S$ on $E_i$ significantly. So, an upper bound on $\bE[S|E_i]$ can give us an upper bound on the weight of the outcomes in $E_i\backslash E$. Putting this into equations, we have
\vspace{-0.2cm}
\begin{equation}\label{eq:increase_in_expectation}
\bE[S|E_i]\geq \frac{\Pb[E]}{\Pb[E_i]}\bE[S|E]+\frac{\Pb[E_i\backslash E]}{\Pb[E_i]}\lambda=\bE[S|E]+\frac{\Pb[E_i\backslash E]}{\Pb[E_i]}\big(\lambda-\bE[S|E]\big).
\end{equation}
\vspace{-0.4cm}

\noindent
This argument clarifies why $r(S)=\lambda-\bE[S|E]$ is the correct quantity to consider. Namely, on every outcome in $E_i\backslash E$, the value of $S$ is by at least $\lambda-\bE[S|E]$ larger than the previous expectation of $S$, namely $\bE[S|E]$. 

Inequality~(\ref{eq:lower_bound}) follows by rearranging (\ref{eq:increase_in_expectation}) to obtain a lower bound on $\Pb[S_i<\lambda]=\Pb[E_i]$. From (\ref{eq:increase_in_expectation}), we have 

\begin{equation}\label{eq:rearrangement}
    \frac{\Pb[E_i\backslash E]}{\Pb[E_i]}\big(\lambda-\bE[S|E]\big)\leq \bE[S|E_i]-\bE[S|E]=\bE[S_i|E_i]-\bE[S|E]+\bE[(X_i-u_i)|E_i],
\end{equation}
\vspace{-0.4cm}

\noindent
where we used the linearity of expectation in the last step, as $S=S_i+(X_i-u_i)$. Since $X_i$ and $E_i$ are independent, the right hand side can be conveniently rearranged as $\bE[S_i|E_i]-\bE[S|E]+\bE[X_i-u_i]=\bE[S_i|E_i]-\bE[S|E]+\bE[S]-\bE[S_i]=b(S)-b(S_i)$. Hence, from (\ref{eq:rearrangement}), we get that 
\vspace{-0.2cm}
\begin{equation*}
   \frac{\Pb[E_i\backslash E]}{\Pb[E_i]}= 1-\frac{\Pb[E]}{\Pb[E_i]}\geq \frac{b(S)-b(S_i)}{r(S)}, \quad \text{ and so } \quad \frac{\Pb[E]}{\Pb[E_i]}\geq 1-\frac{b(S)-b(S_i)}{r(S)}.
\end{equation*}
from where the final inequality (\ref{eq:lower_bound}) directly follows.

Since $S_i\le S$, we have $\Pb[S_i<\lambda]\ge \Pb[S<\lambda]$, and \eqref{eq:lower_bound} therefore implies $b(S)-b(S_i)\ge 0$. If the move is also safe, then $b(S)-b(S_i)=r(S)-r(S_i)+\bE[S]-\bE[S_i]\le \bE[S]-\bE[S_i]=\bE[X_i-u_i]=1-u_i\leq 1$ (since $X_i\geq 0$ always implies $u_i\geq 0$).
\end{proof}

\begin{proof}[Proof of Lemma~\ref{lemma:safe_move}.]
In this proof, we will show that replacing $X_i$ by $u_i$ is a safe move precisely when $X_i$ is positively correlated with the variable $S$ conditioned on $S<\lambda$. This makes sense: removing such a variable $X_i$ certainly decreases the expectation $\bE[S|S<\lambda]$, and consequently increases the difference $r(S)=\lambda-\bE[S|S<\lambda]$. We now give the details.

The key observation is that for every random variable $X_i$ appearing in $S$ we have $\Cov(S, X_i|S<\lambda)=(v_i-u_i)\Pb[X_i=u_i|S<\lambda](r(S_i)-r(S))$, where $u_i<v_i$ is the support of $X_i$.

Once we prove this, we simply write \vspace{-0.1cm}\[0\leq \Var(S|S<\lambda)=\sum_{i=1}^n \Cov(S, X_i|S<\lambda)=\sum_{i=1}^n (v_i-u_i)\Pb[X_i=u_i|S<\lambda](r(S_i)-r(S)),\] where the sums run over the remaining random summands. Since the coefficients $(v_i-u_i)\Pb[X_i=u_i|S<\lambda]$ are positive, this shows that for some $i$ we have $r(S_i)\geq r(S)$, ultimately showing there is a safe move.

Let us now prove the key observation. We can write $X_i=u_i+(v_i-u_i)B$ for a Bernoulli variable $B$, and so $\Cov(S, X_i|S<\lambda)=(v_i-u_i)\Cov(S, B|S<\lambda)$. The latter covariance can be computed as follows
\vspace{-0.1cm}
\begin{align*}
\Cov\!\big(S, B\big|S<\lambda\big)\!=\!-\!\Cov\!\big(S, 1\!-\!B\big| S\!<\!\lambda\big)\!&=\!\bE\big[S\big|S\!<\!\lambda\big]\bE\big[1\!-\!B\big|S\!<\!\lambda\big]\!-\!\bE\big[S(1-B)\big|S<\lambda\big]\\
&=\!\bE\big[S\big|S\!<\!\lambda\big]\Pb\big[B\!=\!0\big|S\!<\!\lambda\big]\!-\!\bE\big[S\big|B\!=\!0\text{ and }S\!<\!\lambda\big]\Pb\big[B\!=\!0\big|S\!<\!\lambda\big]
\end{align*}
When $B=0$ we have $X_i=u_i$ and $S=S_i$. This means that 
$\bE\big[S\big|B\!=\!0\text{ and }S\!<\!\lambda\big]=\bE[S_i|S_i<\lambda]$ and therefore
$\bE\big[S\big|S\!<\!\lambda\big]\!-\!\bE\big[S\big|B\!=\!0\text{ and }S\!<\!\lambda\big]=r(S_i)-r(S)$.
Putting all of the above steps together, we get \[\Cov(S, X_i|S<\lambda)=(v_i-u_i)\Cov(S, B|S<\lambda)= (v_i-u_i)\Pb[X_i=u_i|S<\lambda](r(S_i)-r(S)).\qedhere\]
\end{proof}
\begin{lemma}\label{lemma:two_values}
Let $X_i$ be a random variable whose support includes at least three different values. Then, $X_i$ can be replaced by another random variable whose support includes at most two of the values from the support of $X_i$ and has the same mean. Moreover, $\Pb[\sum_{j=1}^n X_j<n+\delta]$ does not increase through this replacement.
\end{lemma}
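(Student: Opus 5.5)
Condition on all other variables: let $Y=\sum_{j\neq i}X_j$, independent of $X_i$, and set $g(x)=\Pb[Y<n+\delta-x]$. Then $\Pb[\sum_j X_j<n+\delta]=\bE[g(X_i)]$, and $g$ is a fixed function on $[0,\infty)$, taking values in $[0,1]$. We want to replace the law $\mu$ of $X_i$ by a law $\mu'$ supported on at most two points of $\operatorname{supp}\mu$ with the same mean $m=\bE X_i$ and with $\int g\,d\mu'\le\int g\,d\mu$. Since the target quantity is linear in the distribution of $X_i$, this is a linear optimization problem: minimize $\int g\,d\nu$ over probability measures $\nu$ on $\operatorname{supp}\mu$ with $\int x\,d\nu=m$. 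The plan is to show an extreme point of this constraint set, namely a measure on at most two atoms, does at least as well as $\mu$.

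Concretely, I would decompose $\mu$ as a mixture of two-point mean-$m$ measures. If $\mu$ has an atom at $m$ itself, that atom is a one-point mean-$m$ measure; otherwise pair mass below $m$ with mass above $m$. For $a<m<c$ in the support, the measure $\nu_{a,c}=\frac{c-m}{c-a}\delta_a+\frac{m-a}{c-a}\delta_c$ has mean $m$. A standard construction expresses $\mu=\int \nu_{a,c}\,d\pi(a,c)$ (plus possibly a $\delta_m$ component) for some probability measure $\pi$ on pairs. One takes $\pi(da,dc)\propto (c-a)\,\mu|_{<m}(da)\,\mu|_{>m}(dc)$ with the normalization $\int_{x<m}(m-x)\,d\mu=\int_{x>m}(x-m)\,d\mu$, which holds because the mean is $m$. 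Then $\int g\,d\mu=\int\big(\int g\,d\nu_{a,c}\big)d\pi$, so some pair $(a,c)$ (or the point $m$) satisfies $\int g\,d\nu_{a,c}\le\int g\,d\mu$, since a value not exceeding an average must be attained on a set of positive $\pi$-measure. Replacing $X_i$ by a variable with law $\nu_{a,c}$, independent of the rest, keeps the mean, uses two values from the original support, and does not increase the probability.

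The main obstacle is verifying the mixture identity carefully, especially with general (non-discrete) supports. One must check that the $a$-marginal of $\pi$ times the weight $\frac{c-m}{c-a}$, integrated over $c$, reproduces $\mu|_{<m}$, and similarly for $c$. With the kernel $(c-a)$ this reduces to $\int_{x>m}(x-m)\,d\mu=\int_{x<m}(m-x)\,d\mu$, which is exactly the mean condition. One must also check that the resulting two points lie in the support, which holds $\pi$-a.e. If $\mu$ has finite support the argument is a one-line Carathéodory/LP statement, and I would present that case first, noting that the general case follows by the mixture formula above. Finally, applying this lemma to each $X_i$ in turn, each replacement preserves independence and nonnegativity, giving the reduction used in the proof of Theorem~\ref{thm:feige}.
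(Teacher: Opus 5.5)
Your proof is correct. In the finite-support case it coincides with the paper's proof: the paper writes the probability as $\sum_k p_kq_k$ with $q_k=g(v_k)$ and notes that a vertex of $\{p\ge 0,\ \sum_k p_k=1,\ \sum_k p_kv_k=1\}$ has at most two nonzero coordinates.

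The genuine difference is how you handle general laws. The paper reduces to finite support by ``standard approximation arguments''. That step is not quite innocent here. The map $x\mapsto \Pb[Y<n+\delta-x]$ can have jumps, so $\Pb[S<\lambda]$ is not continuous in the law of $X_i$. Moreover, the approximants must keep the mean and take values in the original support.

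Your explicit barycentric decomposition avoids this and works for any law with finite mean. It reads $\mu=\int \nu_{a,c}\,d\pi(a,c)+\mu(\{m\})\,\delta_m$, where $\pi(da,dc)=K^{-1}(c-a)\,\mu|_{<m}(da)\,\mu|_{>m}(dc)$ and $K=\int_{x>m}(x-m)\,d\mu$. Three small points should be recorded when you write it up:
\begin{itemize}
\item With this normalization $\pi$ has total mass $1-\mu(\{m\})$, not $1$. It is the combined mixture, not $\pi$ alone, that is a probability measure, so your ``$\propto$ \dots probability measure'' is slightly off on the weights.
\item The map $(a,c)\mapsto\int g\,d\nu_{a,c}$ is measurable because $g$ is monotone.
\item The set of good pairs has positive $\pi$-measure. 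It therefore meets $\mathrm{supp}\,\mu\times\mathrm{supp}\,\mu$, since $\pi$ is concentrated there.
\end{itemize}

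The LP route is shorter and more familiar for a classroom. Yours is self-contained and fully general, with no approximation step needed.
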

\begin{proof}
Let $T=\sum_{j\ne i}X_j$. By standard approximation arguments, we may assume $X_i$ has finite support, say $v_1, \dots, v_m$. Let us denote $p_k=\Pb[X_i=v_k]$ and $q_k=\Pb[T+v_k<n+\delta]$. By the law of total probability, we have 
\[\Pb\Big[\sum_{j=1}^n X_j<n+\delta\Big]=\sum_{k=1}^m \Pb[X_i=v_k]\Pb\big[T+v_k<n+\delta\big]=\sum_{k=1}^m p_kq_k.\]
Hence, our goal is to minimize the linear program $\sum_{k=1}^m p_kq_k$ over the variables $p_1, \dots, p_m$, while preserving $\bE X_i=1$. This means that our program is subject to constraints $\sum_{k=1}^m p_k=1$ and $\sum_{k=1}^m p_kv_k=1$. The program is feasible (since we already know $X_i$ satisfies the constraints), and since it has two constraints, its optimal solution can have at most two nonzero values among $p_1, \dots, p_m$, meaning that the support of $X_i$ can be reduced to only two values without increasing $\Pb[\sum_{j=1}^n X_j<n+\delta]$.\hspace{5cm}$\square$
\end{proof}
\renewcommand{\qedsymbol}{}
\end{proof}
\vspace{-1.2cm}

\paragraph{Further remarks.} First, our theorem addresses the regime $\delta\geq 1$, since that is the relevant regime for Theorem~\ref{thm:fractional_matching}. However, in general, for any $\delta>0$, we can show that under the assumptions of Theorem~\ref{thm:feige}
\begin{equation}\label{eq:tight_feige}
    \Pb[X_1+\dots+X_n<n+\delta]\geq \min\Big\{\frac{\delta}{1+\delta}, \Big(1-\frac{1}{n+\delta}\Big)^n\Big\}.
\end{equation}
Indeed, the only place where the proof used the assumption $\delta\geq 1$ is to argue that $(1-1/(b+\delta))^b$ is minimized for $b=n$. In general, on can show that the function $(1-\frac{1}{b+\delta})^b$ always attains its minimum either at $b=1$ or at $b=n$, giving the bound (\ref{eq:tight_feige}).

The bound (\ref{eq:tight_feige}) is best possible, as the following two examples show. In the first example, take each $X_i$ to be either $0$ with probability $1-\frac{1}{n+\delta}$ or $n+\delta$ with probability $\frac{1}{n+\delta}$. Then, we have $\sum_{i=1}^n X_i<n+\delta$ only when all of the $X_i$ equal $0$, which happens with probability $\big(1-\frac{1}{n+\delta}\big)^n$. The other extremal example consists of all but one of $X_i$ being deterministically $1$, while the last one equals $0$ with probability $\frac{\delta}{1+\delta}$ and $1+\delta$ with probability $\frac{1}{1+\delta}$. The sum $\sum_{i=1}^n X_i$ is then less than $n+\delta$ precisely when the last random variable is $0$, i.e. with probability $\frac{\delta}{1+\delta}$.



\end{document}